\newtheorem{theorem}{Theorem}[section]
\newtheorem{lemma}[theorem]{Lemma}
\newtheorem{prop}[theorem]{Proposition}
\newtheorem{conj}[theorem]{Conjecture}
\newtheorem{corollary}[theorem]{Corollary}
\theoremstyle{definition}
\newtheorem{definition}[theorem]{Definition}
\newtheorem{remark}[theorem]{Remark}
\newtheorem*{remark*}{Remark}
\def\vol{{\rm vol}}
\def\R{\mathbb{R}}
\def\Z{\mathbb{Z}}
\def\Q{\mathbb{Q}}
\def\P{\mathbb{P}}
\def\det{{\rm det}}
\newcommand{\pro}[2]{\langle #1, #2 \rangle}
\DeclareMathOperator{\conv}{conv}
\newcommand{\0}{\mathbf{0}}
\newcommand{\1}{\mathbf{1}}
\title{On the equality case in Ehrhart's volume conjecture}
\author{Benjamin Nill}
\address{Benjamin Nill \\ Case Western Reserve University \\  Cleveland, OH \\ USA}
\email{benjamin.nill@case.edu}
\thanks{BN is supported by the US National Science Foundation (DMS 1203162)}
\author{Andreas Paffenholz}
\address{Andreas Paffenholz \\ TU Darmstadt \\ Darmstadt\\ Germany}
\email{paffenholz@mathematik.tu-darmstadt.de}
\thanks{AP is supported by the  German Research Foundation (DFG SPP 1489)}
\begin{document}
\maketitle

\begin{abstract}
  Ehrhart's conjecture proposes a sharp upper bound on the volume of a convex body whose barycenter
  is its only interior lattice point. Recently, Berman and Berndtsson proved this conjecture for a class of
  rational polytopes including reflexive polytopes. In particular, they showed that the complex
  projective space has the maximal anticanonical degree among all toric K\"ahler-Einstein Fano
  manifolds.

  In this note, we prove that projective space is the only such toric mani\-fold with maximal degree
  by proving its corresponding convex-geometric statement. We also discuss a generalized version of
  Ehrhart's conjecture involving an invariant corresponding to the so-called greatest lower bound on
  the Ricci curvature.
\end{abstract}

\section{Introduction}

Minkowski's famous lattice point theorem gives a sharp upper bound on the volume of a
centrally-symmetric convex body containing only the origin as a strictly interior lattice
point~\cite{GW93}. A \emph{convex body} $K\subseteq \R^n$ is a compact convex set. $K$ is
\emph{centrally symmetric} if $-x\in K$ for all $x\in K$. Two convex bodies $K,K'$ in $\R^n$ are
\emph{unimodularly equivalent}, if there is an affine lattice automorphism of $\Z^n$ mapping $K$
onto $K'$. Note that this transformation does not change the volume. Let $\Delta_n$ be the {\em
  $n$-dimensional unimodular simplex} with vertices $\0,e_1,\ldots,e_n$, where $\0=(0,\ldots,0)$, and $e_1, \ldots, e_n$ is
the standard lattice basis of $\Z^n$.

In 1964 Ehrhart \cite{Ehr64} posed the following generalization of Minkowski's theorem as a
conjecture (see also \cite[E13]{CFG91} and \cite{GW93}).
\begin{conj}[Ehrhart '64] Let $K \subseteq \R^n$ be an $n$-dimensional convex body with barycenter
  $\0$. If $K$ contains only the origin as an interior lattice point, then
\[\vol(K) \leq (n+1)^n / n!,\]
where equality holds if and only if $K$ is unimodularly equivalent to $(n+1) \Delta_n$.
\label{ehrhart}
\end{conj}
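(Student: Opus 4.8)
The inequality itself is the theorem of Berman and Berndtsson for the class of bodies they treat (reflexive polytopes, and more generally the rational polytopes appearing there), so in what follows I take the bound $\vol(K)\le (n+1)^n/n!$ as given and concentrate on the equality statement, which is the new content. The case $n=1$ is immediate ($K=[-a,a]$ with $2a=2$), so assume $n\ge 2$. The plan is in two parts: first show that equality forces $K$ to be a simplex, and then identify the extremal simplices as $(n+1)\Delta_n$.

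\smallskip\noindent\emph{Part I: equality forces a simplex.} The basic tool is that every convex body $K$ with barycenter $\0$ satisfies $-\tfrac1n K\subseteq K$, with equality --- $-\tfrac1n\partial K$ touching $\partial K$ along all $n+1$ facets --- precisely when $K$ is a simplex (this is the classical bound on the Minkowski measure of symmetry and its equality case; for a simplex, $-\tfrac1n v_i$ is the barycenter of the opposite facet). The task is to show that attaining the volume bound forces $K$ to sit at this extreme. One route is to prove that if the inclusion is not rigid then $K$ can be genuinely enlarged while keeping its barycenter at $\0$ and keeping $\0$ as the only interior lattice point, contradicting maximality of the volume; a second route is to extract the equality condition directly from Berman and Berndtsson's proof --- in the toric K\"ahler--Einstein picture, equality should force the metric to be Fubini--Study, hence $X\cong\P^n$, which on the convex side says that the Legendre-dual potential attached to the barycenter normalization becomes piecewise affine with exactly $n+1$ pieces, i.e.\ $K$ (equivalently $K^\circ$) is a simplex. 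I expect the heart of the difficulty to be here: making the rigidity quantitative, and in the enlargement argument controlling that no new interior lattice point is created.

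\smallskip\noindent\emph{Part II: identifying the simplex.} Suppose now $K=\conv(v_0,\dots,v_n)$ is a lattice simplex with barycenter $\0$, so $\sum_{i=0}^n v_i=\0$ and the barycentric coordinates of $\0$ are all equal to $\tfrac1{n+1}$. Eliminating $v_0=-\sum_{i\ge1}v_i$ gives
\[ \vol(K)=\frac{n+1}{n!}\,|\det(v_1,\dots,v_n)|, \]
so $\vol(K)=(n+1)^n/n!$ is equivalent to the integer $|\det(v_1,\dots,v_n)|$ being exactly $(n+1)^{n-1}$. It then remains to prove the purely lattice-geometric statement that a lattice simplex with $\sum_i v_i=\0$ whose only interior lattice point is $\0$ satisfies $|\det(v_1,\dots,v_n)|\le (n+1)^{n-1}$, with equality only for $v_0=-\1,\ v_i=(n+1)e_i-\1$ up to unimodular equivalence. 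I would prove this by induction on $n$: each facet lies at integer lattice distance $\ge 1$ from $\0$, the barycenter condition forces these distances to equal $1$ in the extremal case (so $K$ is a reflexive simplex), and then analysing the vertex figure at a vertex together with the opposite facet reduces extremality in dimension $n$ to extremality in dimension $n-1$. A final volume computation confirms that the extremal $K$ is unimodularly equivalent to $(n+1)\Delta_n$ and is the unique equality case.
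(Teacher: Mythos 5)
The statement you were asked about is a \emph{conjecture}, and it is still open in the generality in which it is stated: the paper itself only proves it under the additional hypothesis that $K$ is contained in the dual polytope of a lattice polytope (Theorem~\ref{main}), and explicitly exhibits a convex body not satisfying that hypothesis. Your proposal does not close this gap; it presupposes it away. Concretely: you ``take the bound $\vol(K)\le (n+1)^n/n!$ as given'' by appealing to Berman--Berndtsson, but their theorem covers exactly the restricted class above (reflexive-type polytopes, equivalently bodies inside duals of lattice polytopes), not an arbitrary convex body with barycenter $\0$ and only the origin as interior lattice point; for general $K$ the best known bound is of the order $4^n$ (Milman--Pajor combined with Minkowski), far from $(n+1)^n/n!$. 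Second, Part~I is a plan rather than an argument: both proposed routes (enlarging a non-simplicial $K$ without creating interior lattice points, or extracting rigidity from the K\"ahler--Einstein side) are left unexecuted, and you yourself flag that the heart of the difficulty lies there. Third, even granting that equality forces $K$ to be a simplex, Part~II silently assumes $K$ is a \emph{lattice} simplex, so that $|\det(v_1,\dots,v_n)|$ is an integer and an induction on reflexive simplices can start; nothing in Part~I would give you lattice vertices, and proving that the extremal body has lattice vertices is essentially the uniqueness statement you are trying to establish. The proposed induction itself (``the barycenter condition forces the facet distances to equal $1$ in the extremal case'') is also only asserted.

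For comparison, the paper's proof of the case it can handle proceeds quite differently: it composes Gr\"unbaum's inequality with an affine map $\phi$ (built from lattice vertices of $Q$) sending $P=Q^*$ into $\R^n_{\ge 0}$, so that the volume bound drops out of $\vol(\R^n_{\ge0}\cap\{\pro{\1}{x}\le n\})=n^n/n!$; in the equality case, forcing equality throughout this chain shows $|\det(\phi)|=1$, that $K$ coincides with $P$, and that $P$ is reflexive, and then a result on reflexive polytopes with barycenter $\0$ (lattice points in relative interiors of facets come in pairs $\pm m$, Lemma~\ref{lem:nil06}) pins down the facet normals and yields $(n+1)\Delta_n$; alternatively one can invoke Szarek's characterization of equality in Gr\"unbaum's inequality (Remark~\ref{szarek}), which says the extremal body is a pyramid. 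None of these mechanisms appears in your proposal, and the measure-of-symmetry and lattice-determinant ideas you substitute for them are not developed to the point of replacing them. As it stands, the proposal neither proves the open conjecture nor reproves the paper's special case.
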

Ehrhart proved the upper bound in the conjecture in several special cases, e.g., in dimension $2$
\cite{Ehr55a} and for simplices \cite{Ehr78}. The uniqueness of the equality case is suggested in
\cite{Ehr64} and proved in dimension $2$ \cite{Ehr55b}.

A very good upper bound which is only slightly weaker than the conjectured one can be deduced from a
paper of Milman and Pajor. In Corollary 3(2) of \cite{MP00} they show that $\vol(K)\le 2^n\vol(K\cap
(-K))$ for any compact convex body with barycenter in the origin. If additionally the origin is the
only lattice point in the interior of $K$, then we can combine this result with Minkowski's first
theorem to obtain $\vol(K)\le 4^n$. Let us note that by Stirling's approximation the conjectural
upper bound is asymptotically equal to $e^{n+1}/\sqrt{2\pi n}$.

For rational polytopes (i.e., polytopes with vertices in $\Q^n$), there is a natural relation of
this conjecture to differential geometry via toric varieties, see e.g., \cite{Mab87,NP11}. A
polytope $Q$ is a \emph{lattice polytope} if its vertices are in $\Z^n$.  Let the origin $\0$ be an
interior point of $Q$. In this case, the {\em dual polytope} $Q^*$ is defined as
\[Q^*\ :=\ \left\{x \in \R^n \;:\; \pro{v}{x} \ge -1 \; \text{ for all vertices } v \text{ of }
  Q\right\},\] %
where we chose an inner product $\pro{\cdot}{\cdot}$ on $\R^n$ (We remark that the usual convention
is to consider dual vector spaces. We use the above notation for the sake of simplicity of this note.)
It follows from $Q$ being a lattice polytope that $\0$ is the only interior lattice point of the
rational polytope $Q^*$. In general, $Q^*$ is not a lattice polytope.  If it is, then we say that
$Q$ and $Q^*$ are \emph{reflexive polytopes}.

For the following, let $Q\subseteq \R^n$ be a \emph{Fano polytope}, i.e., an $n$-dimensional lattice
polytope that contains the origin in its interior and whose vertices are primitive (i.e., not
multiples of a non-zero lattice point).  Then the associated fan over the faces of $Q$ gives rise to
a {\em toric Fano variety} $X$ (see, \textit{e.g.},~\cite{Fulton} for details of this
correspondence). In particular, $X\cong \P^n$ if and only if the dual of the associated polytope $Q$
is unimodularly equivalent to $(n+1)\Delta_n$. Wang and Zhu~\cite{WZ04} (in the nonsingular case)
and Berman and Berndtsson~\cite{BB12} (in the singular case) showed that $X$ admits a
K\"ahler--Einstein metric if and only if the barycenter of $Q^*$ is $\0$. Moreover, it is known that
the anticanonical degree $c_{1}(X)^{n}$ equals $n!\; \vol(Q^*)$. Hence, Ehrhart's conjecture can be
phrased in this setting as the problem of finding the maximal degree of a K\"ahler-Einstein toric
Fano variety \cite{NP11}. Recently, this was solved by Berman and Berndtsson \cite[Thm.1.2]{BB12}:

\begin{theorem}[Berman, Berndtsson '12]
  Let $X$ be an $n$-dimensional toric Fano variety which admits a (singular) K\"ahler-Einstein
  metric.  Then its first Chern class $c_{1}(X)$ satisfies the following upper bound:
\[
c_{1}(X)^{n}\leq(n+1)^{n}
\]
\label{thm-bb}
\end{theorem}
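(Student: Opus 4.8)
The plan is to pass to the equivalent convex-geometric statement and then isolate the single step that is not formal. By the correspondence recalled above (Wang--Zhu, Berman--Berndtsson, together with $c_1(X)^n=n!\,\vol(Q^*)$), Theorem~\ref{thm-bb} is equivalent to the assertion that, for a Fano polytope $Q\subseteq\R^n$ whose dual $Q^*$ has barycenter $\0$, one has $\vol(Q^*)\le (n+1)^n/n!$. The computational tool is the identity
\[
n!\,\vol(Q^*)\ =\ \int_{\R^n} e^{\phi_Q(x)}\,dx, \qquad \phi_Q(x):=\min_{v\in\mathrm{vert}(Q)}\pro{v}{x},
\]
where $\phi_Q$ is concave, positively $1$-homogeneous, with $\phi_Q(\0)=0$ and $\phi_Q<0$ away from $\0$ (since $\0\in\mathrm{int}(Q)$), and $\{x:\phi_Q(x)\ge-1\}=Q^*$; the identity follows from integration in polar coordinates together with $\int_{\R^n}e^{-\|x\|_K}\,dx=n!\,\vol(K)$ for any convex body $K$ with $\0$ in its interior.

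The first step is to read the barycenter hypothesis as a minimality condition. The function $F(\xi):=\int_{\R^n}e^{\phi_Q(x)+\pro{\xi}{x}}\,dx$ is convex on $\{\xi:-\xi\in\mathrm{int}(Q)\}$, and since $\phi_Q(x)+\pro{\xi}{x}=\phi_{Q+\xi}(x)$ we have $F(\xi)=n!\,\vol\big((Q+\xi)^*\big)$. A polar-coordinate computation gives $\nabla F(\0)=\int_{\R^n}x\,e^{\phi_Q(x)}\,dx=(n+1)!\int_{Q^*}x\,dx$, so the hypothesis says precisely that $\xi=\0$ is the (global, by convexity) minimum of $F$; equivalently $\vol(Q^*)=\min_\xi\vol\big((Q+\xi)^*\big)$, i.e.\ $\0$ is the Santal\'o point of $Q$ and $\vol(Q^*)$ is the smallest polar-dual volume over all placements of the origin in $\mathrm{int}(Q)$. (In PDE terms this is the solvability of the real Monge--Amp\`ere equation $\det D^2u=e^{-u}$ with $\nabla u$ a diffeomorphism onto $\mathrm{int}(Q^*)$ --- the toric K\"ahler--Einstein equation --- in which case $\vol(Q^*)=\int_{\R^n}e^{-u}$.)

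It then remains to bound $\min_\xi\vol\big((Q+\xi)^*\big)$ by $(n+1)^n/n!$, and here the lattice structure of $Q$ is indispensable --- for a round body the inequality fails grossly. A soft estimate will not do: Blaschke--Santal\'o gives $\vol(Q^*)\le\vol(B^n)^2/\vol(Q)$ (the origin being the Santal\'o point, by the previous step), and even combined with the minimal volume of a lattice polytope having an interior lattice point this already loses in dimension $2$, yielding $\approx 6.58$ in place of $9/2$; likewise Milman--Pajor with Minkowski gives only a bound of order $C^n$. The sharp constant forces one to use that the facets of $Q^*$ lie on hyperplanes $\pro{v}{\cdot}=-1$ with $v$ a lattice point --- that $Q^*$ is the dual of a lattice polytope --- not merely that $\0$ is its only interior lattice point. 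Berman and Berndtsson carry this out analytically, via pluripotential theory on the toric variety; the convex-geometric route I would attempt is to decompose $\R^n$ into the maximal cones $\sigma_v$ of the normal fan of $Q$, write $n!\,\vol(Q^*)=\sum_v\int_{\sigma_v}e^{\pro{v}{x}}\,dx$, and exploit that each $v$ is a primitive lattice vector, that the $v$ positively span $\R^n$, and that they are balanced in the sense forced by the minimality of $F$, to reduce the estimate to an optimization over the finitely many admissible lattice configurations --- the expected extremum being $e_1,\dots,e_n,-e_1-\cdots-e_n$ (the $\P^n$-polytope), for which the sum equals $(n+1)^n$. The main obstacle is precisely this last step: ruling out, uniformly in $n$, the ``lopsided'' lattice configurations that might beat the simplex is the delicate heart of the argument, and is where Berman--Berndtsson bring in complex-analytic machinery.
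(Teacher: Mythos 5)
Your reduction of Theorem~\ref{thm-bb} to the convex-geometric bound $\vol(Q^*)\le (n+1)^n/n!$ for a Fano polytope $Q$ whose dual has barycenter $\0$, the identity $n!\,\vol(Q^*)=\int_{\R^n}e^{\phi_Q}\,dx$, and your reading of the barycenter hypothesis as $\nabla F(\0)=\0$ (the Santal\'o-point/minimality condition) are all correct. But the proposal stops exactly where the proof would have to begin: the inequality itself is never established. Your final paragraph is a plan, not an argument --- the ``optimization over the finitely many admissible lattice configurations'' is not carried out, the family of Fano polytopes in a fixed dimension is infinite up to unimodular equivalence (e.g.\ $\conv(e_1,e_2,-e_1-ke_2)$ for $k\ge 1$), so the claimed finiteness would itself have to come from the barycenter condition, and no mechanism is offered for excluding lopsided configurations uniformly in $n$. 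Moreover, your assertion that the sharp constant is where Berman--Berndtsson's complex-analytic machinery becomes indispensable is mistaken: the bound has a short, purely convex-geometric proof due to Klartag, and this is precisely the argument reproduced in Section~2 of the paper, which applies verbatim here because $K=Q^*$ is the dual of the lattice polytope $Q$ and has barycenter $\0$ (the pluripotential theory in \cite{BB12} is only needed for the equivalence, in the singular case, between existence of a K\"ahler--Einstein metric and the barycenter condition, which you may simply quote).

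Concretely: put $P=Q^*$, fix a vertex $v$ of $P$ and facets $F_1,\dots,F_n$ of $P$ through $v$ whose corresponding vertices $l_{F_1},\dots,l_{F_n}$ of $Q$ span $\R^n$, and set $\phi(x)=(\pro{l_{F_1}}{x}+1,\dots,\pro{l_{F_n}}{x}+1)$. Since the $l_{F_i}$ are lattice vectors, $|\det(\phi)|\ge 1$, so $\vol(P)\le\vol(\phi(P))$; moreover $\phi(P)\subseteq\R^n_{\ge 0}$ and $\phi(\0)=\1$ is the barycenter of $\phi(P)$. Gr\"unbaum's inequality for the half-space $\{x:\pro{\1}{x}\le n\}$, whose boundary contains the barycenter, gives $\left(\frac{n}{n+1}\right)^n\vol(\phi(P))\le\vol\bigl(\R^n_{\ge 0}\cap\{x:\pro{\1}{x}\le n\}\bigr)=\vol(n\Delta_n)=n^n/n!$, hence $\vol(Q^*)\le (n+1)^n/n!$, which is the asserted bound $c_1(X)^n\le (n+1)^n$. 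So the lattice structure enters only through $|\det(\phi)|\ge 1$ and the barycenter hypothesis only through Gr\"unbaum's inequality; no normal-fan decomposition, no classification of lattice configurations, and no analytic input is required. Your preliminary steps (the exponential-integral identity and the minimality interpretation) are correct but end up unused on this route.
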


The motivation of this note is to clarify the equality case:

\begin{prop}\label{main-alg-geo}
  In the situation of Theorem~\ref{thm-bb}, $c_{1}(X)^{n}=(n+1)^{n}$ if and only if $X \cong \P^n$.
\end{prop}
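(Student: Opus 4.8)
The plan is to restate Proposition~\ref{main-alg-geo} in purely convex-geometric terms and then attack that. By the dictionary recalled in the introduction, $X$ admits a (singular) Kähler–Einstein metric iff the barycenter of $Q^*$ is $\0$, the anticanonical degree satisfies $c_1(X)^n = n!\,\vol(Q^*)$, and $X \cong \P^n$ iff $Q^*$ is unimodularly equivalent to $(n+1)\Delta_n$. Setting $K := Q^*$, the statement becomes: if $K$ is an $n$-dimensional rational polytope with barycenter $\0$ whose only interior lattice point is $\0$, and $K$ arises as the dual of a Fano polytope, then $\vol(K) = (n+1)^n/n!$ implies $K$ is unimodularly equivalent to $(n+1)\Delta_n$. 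In other words, I need the equality case of Ehrhart's conjecture in exactly the generality in which Berman–Berndtsson proved the inequality. Since Theorem~\ref{thm-bb} already gives $\vol(K) \le (n+1)^n/n!$, the task is only to characterize when the bound is attained.

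The first step is to trace through the Berman–Berndtsson proof of Theorem~\ref{thm-bb} and isolate the chain of inequalities that yields the bound, then determine what equality in each link forces. Their argument passes through a functional/variational inequality (an Aubin–Mabuchi or Donaldson-type functional, or equivalently a sharp form of the inequality relating the volume of $K$ to the volume of $K^*$ together with the barycenter condition); the clean convex-geometric core is a Blaschke–Santaló-type bound or a bound of the form $\vol(K)\,\vol(K^*)$ against the barycenter, specialized so that $K^* = Q$ is a lattice polytope. I expect the equality conditions to pin down $K^*$ (equivalently $Q$) up to unimodular equivalence: the extremal configuration should force $Q$ to be the simplex $\conv(-\1, e_1,\dots,e_n)$ (whose dual is $(n+1)\Delta_n$ up to translation), because that is the unique lattice polytope making every inequality in the chain tight.

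Concretely, the key steps I would carry out are: (1) reduce to the claim about $Q$ being a Fano polytope whose dual has barycenter $\0$ and satisfies $n!\,\vol(Q^*) = (n+1)^n$; (2) extract from the proof of Theorem~\ref{thm-bb} the precise equality characterization — most likely that some convex function associated to $Q$ (a support-function-type or log-concave density) must be affine, or that an application of Jensen's inequality is tight, forcing $Q^*$ to be a simplex with a distinguished vertex structure; (3) show a lattice simplex $S$ containing $\0$ in its interior with barycenter $\0$ and $\vol(S) = (n+1)^n/n!$ must have vertices $-(n+1)u$ together with $(n+1)$ times a lattice basis for a suitable $u$, hence is unimodularly equivalent to $(n+1)\Delta_n$ — this is the known simplex case of Ehrhart's equality statement and can be cited or reproven by an elementary volume-and-barycenter computation; (4) conclude $X \cong \P^n$. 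The main obstacle is step (2): the inequality in \cite{BB12} is proved by complex-analytic/variational methods, and squeezing a sharp equality characterization out of it requires either a careful reading of where each estimate is tight or an independent convex-geometric argument (e.g., via Minkowski's theorem combined with the Milman–Pajor bound $\vol(K) \le 2^n \vol(K \cap (-K))$, sharpened using the barycenter hypothesis) that directly yields rigidity. I would pursue the convex-geometric route first, aiming to show that equality forces $K = -(K)$-type symmetry to degenerate in a way that only a dilated unimodular simplex survives.
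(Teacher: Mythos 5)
Your step (1) — translating the statement into the convex-geometric claim that a rational polytope $K=Q^*$ with barycenter $\0$, contained in the dual of a lattice polytope, and with $\vol(K)=(n+1)^n/n!$ must be unimodularly equivalent to $(n+1)\Delta_n$ — is correct and is exactly how the paper frames Proposition~\ref{main-alg-geo} as a special case of Theorem~\ref{main}. But the heart of the matter, your step (2), is left as an acknowledged obstacle rather than resolved, and the tools you propose for it would not work. The convex-geometric core of the relevant part of \cite{BB12} is not a Blaschke--Santal\'o or $\vol(K)\vol(K^*)$ bound: it is the Klartag/Gr\"unbaum argument, in which an affine map $\phi$ with integral linear part (built from $n$ vertices of $Q$ meeting at a vertex $v$ of $P=Q^*$) sends $P$ into $\R^n_{\ge 0}$ and Gr\"unbaum's inequality is applied to the half-space $\{\pro{\1}{x}\le n\}$. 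Your alternative route via Milman--Pajor plus Minkowski is explicitly noted in the paper to give only $\vol(K)\le 4^n$, which is strictly weaker than $(n+1)^n/n!$, so no equality characterization at the sharp constant can be extracted from it. Moreover, your expectation that tightness immediately forces $K$ to be a simplex (via an ``affine function'' or ``Jensen is tight'' mechanism, i.e.\ the equality case of Gr\"unbaum) was not something one could simply cite: as the paper's Remark~\ref{szarek} explains, the equality characterization of Gr\"unbaum's inequality was not available in the literature at the time, and Gr\"unbaum's original formulation is not directly applicable.

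What is actually needed, and what the paper supplies, is a genuinely lattice-geometric rigidity argument that your outline does not contain. From equality in the chain of inequalities one first deduces that the nested convex bodies $\phi(K)\cap\eta^-\subseteq\phi(P)\cap\eta^-\subseteq\R^n_{\ge 0}\cap\eta^-$ coincide, which (letting the vertex $v$ vary) gives $K=P$; equality $|\det(\phi)|=1$ then makes $\phi$ a lattice transformation, so every vertex of $P$ is a lattice point and $P$ is reflexive. At this point the extremal body is only known to be a reflexive polytope $R=\phi(P)$ with barycenter $\1$ containing $n\Delta_n$ — not yet a simplex. The paper then invokes a result from \cite{Nil06} (central symmetry of the set of lattice points in the relative interiors of facets of a reflexive polytope with barycenter $\0$) to place each $\1+e_i$ in the relative interior of a facet of $R$, and an explicit computation with the primitive outer normals $u_i$, together with a volume comparison ruling out the degenerate case ($\vol(R)\le 2n^{n-1}/(n-1)!<(n+1)^n/n!$), forces $u_1=\1$ and hence $R\subseteq(n+1)\Delta_n$, so $R=(n+1)\Delta_n$ by equality of volumes. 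Your step (3), the classification of extremal lattice simplices, addresses a subproblem that only becomes relevant after this rigidity is established, so as it stands the proposal identifies the right target but is missing the argument that constitutes the paper's actual contribution.
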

This is a special case of the following main result. 
\begin{theorem}
  The conclusion of Conjecture~\ref{ehrhart} holds for any $n$-dimensional convex body $K \subseteq \R^n$ with
  barycenter $\0$ that is contained in the dual polytope of a lattice polytope.
\label{main}
\end{theorem}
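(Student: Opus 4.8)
The plan is to leverage the Berman–Berndtsson bound (Theorem~\ref{thm-bb}) together with Ehrhart's result for simplices, reducing the general case to the reflexive/Fano setting by an approximation-plus-extremality argument. Concretely, suppose $K$ has barycenter $\0$, contains only the origin as an interior lattice point, and satisfies $K \subseteq Q^*$ for some lattice polytope $Q$ (with $\0$ in its interior — this is forced since $\0$ is interior to $K$). The key point is that $Q^*$ is a rational polytope with $\0$ as its only interior lattice point, and after replacing $Q$ by the convex hull of the primitive generators on its rays (which only shrinks $Q$, hence enlarges $Q^*$ — still fine since we want an upper bound on $\vol(K)\le\vol(Q^*)$) we may assume $Q$ is a Fano polytope.

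First I would establish the inequality. Since $K\subseteq Q^*$ we have $\vol(K)\le \vol(Q^*)$, but the barycenter of $Q^*$ need not be $\0$, so Theorem~\ref{thm-bb} does not apply directly to $Q^*$. The trick is to use the barycenter hypothesis on $K$ itself: I would invoke the theorem of Berman–Berndtsson (or rather the convex-geometric inequality underlying it) in a form that only needs the barycenter of the \emph{body} $K$ and the containment in a dual-of-lattice-polytope. If the available statement genuinely requires the barycenter of $Q^*$, then I would instead argue via a continuity/limiting argument: dilate and perturb $Q$ to a sequence of Fano polytopes $Q_j$ whose duals $Q_j^*$ have barycenter approaching $\0$ and contain $K$ in the limit, apply Theorem~\ref{thm-bb} to each, and pass to the limit to get $\vol(K)\le(n+1)^n/n!$.

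The heart of the matter — and the main obstacle — is the equality case. Suppose $\vol(K)=(n+1)^n/n!$. Then necessarily $\vol(Q^*)\ge(n+1)^n/n!$, so by Theorem~\ref{thm-bb} applied to the toric Fano variety of $Q$ (once we know its dual has barycenter $\0$, which I would need to deduce — perhaps $K=Q^*$ is forced by the volume equality $\vol(K)=\vol(Q^*)$ since $K\subseteq Q^*$ and both are convex bodies), we get $\vol(Q^*)=(n+1)^n/n!$ exactly, i.e. equality in Berman–Berndtsson. Now I would use Ehrhart's theorem for simplices \cite{Ehr78}: the extremal lattice polytopes $Q$ are exactly those whose dual is unimodularly equivalent to $(n+1)\Delta_n$, which by the correspondence recalled in the excerpt means $X\cong\P^n$. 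To close the argument I must rule out the possibility that $K$ is a proper subset of $Q^*$ with the same volume (impossible for convex bodies, giving $K=Q^*$), and I must verify the barycenter of $Q^*=(n+1)\Delta_n$ is indeed $\0$ and that this is consistent — i.e. pin down exactly which dilated simplex appears. The delicate part is showing $Q^*$ must itself be a \emph{lattice} simplex (reflexivity), rather than merely a rational polytope of maximal volume; I expect this to follow from a rigidity statement in the equality case of the Milman–Pajor/Minkowski chain, or from analyzing when Ehrhart's simplex bound is tight, but making this rigorous is where the real work lies.

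Finally, Proposition~\ref{main-alg-geo} falls out by specializing Theorem~\ref{main} to $K=Q^*$ for $Q$ a Fano polytope whose dual has barycenter $\0$, using $c_1(X)^n=n!\,\vol(Q^*)$ and the stated equivalence $X\cong\P^n \iff Q^*\sim(n+1)\Delta_n$.
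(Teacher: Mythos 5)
There is a genuine gap --- in fact two, and they concern both halves of the statement. For the inequality, your plan hinges on applying Theorem~\ref{thm-bb} (or a limit of it), but that theorem requires the \emph{barycenter of $Q^*$} to be at the origin (equivalently, the K\"ahler--Einstein condition), whereas the hypothesis only concerns the barycenter of $K$, which may be a much smaller body inside $Q^*$. Your fallback, perturbing $Q$ to Fano polytopes $Q_j$ whose duals have barycenter tending to $\0$ and contain $K$, cannot work: Fano polytopes have lattice vertices and admit no continuous perturbation, and there is no reason any lattice polytope near $Q$ has dual with barycenter near $\0$ --- that condition is exactly the rare K\"ahler--Einstein property. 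The statement ``in a form that only needs the barycenter of the body $K$'' is precisely Theorem~\ref{main} itself, so as written the argument is circular unless you actually carry out the underlying convex-geometric argument. The paper does exactly that: it maps $P=Q^*$ by an affine-lattice-determinant map $\phi$ (built from $n$ vertices of $Q$ at a chosen vertex $v$ of $P$) into $\R^n_{\ge 0}$ with $|\det(\phi)|\ge 1$ and $\phi(\0)=\1$, and then applies Gr\"unbaum's inequality~\eqref{eq:1} to the half-space $\pro{\1}{x}\le n$ through the barycenter of $\phi(K)$, comparing with $\vol(n\Delta_n)=n^n/n!$. No algebraic geometry and no barycenter condition on $Q^*$ is needed.

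For the equality case --- the actual novel content of the paper --- your argument is both circular and incomplete. You want to conclude $\vol(Q^*)=(n+1)^n/n!$ and hence $K=Q^*$, but the only way you propose to bound $\vol(Q^*)$ from above is Theorem~\ref{thm-bb}, which again needs the barycenter of $Q^*$ to be $\0$, something you admit you would have to deduce from $K=Q^*$ --- the very conclusion you are after. Even granting $K=Q^*$ reflexive with barycenter $\0$, citing Ehrhart's simplex result \cite{Ehr78} does not finish: $Q^*$ is not known to be a simplex, and Theorem~\ref{thm-bb} says nothing about uniqueness in its equality case (indeed Proposition~\ref{main-alg-geo} is \emph{deduced from} Theorem~\ref{main} in the paper, not the other way around). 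The step you flag as ``where the real work lies'' is exactly what the paper supplies: equality in~\eqref{eq:2} forces $\R^n_{\ge0}\cap\eta^-=\phi(K)\cap\eta^-$, hence every vertex $v$ of $P$ lies in $K$, so $K=P$; equality in~\eqref{det} gives $|\det(\phi)|=1$, so $P$ is a lattice (hence reflexive) polytope; and then Lemma~\ref{lem:nil06} (central symmetry of lattice points in relative interiors of facets, from \cite{Nil06}) together with an explicit analysis of the facet normals $u_i$ at the points $\1+e_i$ pins down $\phi(P)=(n+1)\Delta_n$. None of this machinery, or a substitute for it, appears in your proposal.
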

The proof relies on the convex-geometric part of the argument in \cite{BB12} which Berman and
Berndtsson attribute to Bo'az Klartag. The only novel part is the treatment of the equality case.
Note that the condition in the theorem is equivalent to the condition that the dual of $K$ contains
a full-dimensional lattice polytope having the origin in its interior. Here is an example of a
convex body that does not satisfy this condition. Let $K :=
\conv\left(\pm(3/2,1/4),\pm(3/2,5/4)\right)$. Then $K$ is a convex body whose only interior lattice
point is $\0$. Its dual is $K^*=\conv\left(\pm(1, -2), \pm(\frac23,0)\right)$, and the only lattice
points in $K^*$ are $\0$ and $\pm(1,-2)$.

Theorem~\ref{main} generalizes Corollary~1.4 in \cite{BB12}. In particular, Ehrhart's conjecture
holds for all reflexive polytopes with barycenter $\0$. This huge class of lattice polytopes is of importance
in string theory since they give rise to many examples of mirror-symmetric Calabi-Yau manifolds
\cite{Bat94, Nil05,KS98,KS00}.

\smallskip

\textbf{Organization of the paper:} Section~2 contains the proof of Theorem~\ref{main}. In Section~3
we discuss a variant of Conjecture~\ref{ehrhart} regarding the so-called greatest lower bound on the Ricci
curvature.

\smallskip

\textbf{Acknowledgments:} We would like to thank Robert Berman and Xiaodong Wang for their interest
and for telling us about the generalized version of Ehrhart's conjecture. We thank the referee for
several suggestions to improve the paper. In particular, he pointed us to the result of Milman and
Pajor mentioned above and its relation to Ehrhart's conjecture. We are especially grateful to Stanislaw Szarek for providing the alternative proof in Remark~\ref{szarek} and publishing his proof of Gr\"unbaum's inequality as a courtesy to the readers.

\section{Proof of Theorem~\ref{main}}

We may assume $n\ge 2$. Let $\0$ be the origin in $\R^n$ and $\1:=(1,\ldots,1)$.  We follow the
approach in the proof of Corollary~1.4 and Remark~3.2 in \cite{BB12}. A key ingredient of the proof
is Gr\"unbaum's inequality~\cite{Gru60}. For any convex body $K\subseteq\R^n$ and any closed
half-space $H$ that contains the barycenter of $K$ it states that
\begin{align}
  \label{eq:1}
  \vol(K\cap H)\ \ge\ \left(\frac n{n+1}\right)^n\vol(K)\,.
\end{align}

Now let $K \subseteq \R^n$ be an $n$-dimensional convex body with barycenter $\0$ that is contained
in $P := Q^*$ for a lattice polytope $Q$ containing $\0$ in its interior. Fix a vertex $v$ of $P$.
The facets $F$ of $P$ are in one-to-one correspondence with the vertices $l_F$ of $Q$.  Choose
facets $F_1, \ldots, F_n$ containing $v$ such that $l_{F_1}, \ldots, l_{F_n}$ span $\R^n$.  We
consider the invertible affine-linear map
\begin{align*}
  \phi \;:\; \R^n\ &\longrightarrow\ \R^n\; \\
  x\ &\longmapsto (\pro{l_{F_1}}{x}+1, \ldots, \pro{l_{F_n}}{x}+1). 
\end{align*}
By construction, $\phi$ maps the polytope $P$ into the non-negative orthant $\R^n_{\ge 0}$.  Since
$l_{F_1}, \ldots, l_{F_n}$ are lattice points we have $|\det(\phi)| \ge 1$. In particular,
\begin{equation}
\vol(K)\ \le \ |\det(\phi)|\vol(K)\ =\ \vol(\phi(K))\,.\label{det}
\end{equation}
The barycenter of $\phi(K)$ is the image of the barycenter of $K$, so $b:=\phi(\0)=\1$. Consider 
$\eta:=\1$ as a linear functional on $\R^n$. Then $\eta^- := \{x \in \R^n \;:\; \pro{\eta}{x} \le
n\}$ is an affine half-space containing the origin in its interior and $b$ on the boundary. Using
$\phi(K) \subseteq \phi(P) \subseteq \R^n_{\ge 0}$, we get
\begin{align}
  \label{eq:2}
  \begin{split}
    \left(\frac{n}{n+1}\right)^n \vol(\phi(K))\  &\le\ \vol(\phi(K) \cap \eta^-)\ \le\ \vol(\phi(P) \cap \eta^-)\\
    &\le\ \vol(\R^n_{\ge 0} \cap \eta^-)\ =\ \vol(n \Delta_n)\ =\ \frac{n^n}{n!}\,,
  \end{split}
\end{align}
where the first inequality follows from (\ref{eq:1}). Combining this with (\ref{det}) implies the
desired inequality $\vol(K) \le \vol(\phi(K)) \le (n+1)^n/n!$.

\smallskip

Now, let us assume that $\vol(K)=(n+1)^n/n!$. This implies that all inequalities in (\ref{det}) and
(\ref{eq:2}) are in fact equalities.  The equations in (\ref{eq:2}) imply $\R^n_{\ge 0} \cap
\eta^-=\phi(K) \cap \eta^-$. Hence, $\phi(v) = \0 \in \R^n_{\ge0}\cap \eta^-$ yields $v \in K$. Since
$v$ was chosen arbitrarily we get $K=P$.  Moreover, equality in (\ref{det}) implies that
$|\det(\phi)|=1$, so $\phi$ is an affine lattice transformation of $\Z^n$. Thus, $P$ and $\phi(P)$
are unimodularly equivalent. In particular, since $\0$ is a lattice point, also $v$ is a
lattice point. Again, since $v$ was arbitrary, $P$ is a lattice polytope, and therefore reflexive
(recall that by assumption $P^*=Q$ is a lattice polytope).

We will use the following result on reflexive polytopes contained in the paper \cite{Nil06} of the first author:

\begin{lemma}\label{lem:nil06}
  Let $S$ be a reflexive polytope with barycenter in the origin. If $m$ is a relatively interior
  lattice point of a facet of $S$, then $-m$ is again in the relative interior of a facet of $S$.
\end{lemma}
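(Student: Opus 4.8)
The plan is to translate the statement into the language of the dual polytope $S^*$ and to reduce it to a symmetry property of the ``roots'' of $S$. Since $S$ is reflexive, its facets are in bijection with the vertices $u$ of $S^*$, the facet $F_u$ corresponding to $u$ lying in $\{x:\pro{u}{x}=-1\}$ while $S\subseteq\{x:\pro{u}{x}\ge -1\}$. A lattice point $m$ then lies in the relative interior of $F_u$ exactly when $\pro{m}{u}=-1$ and $\pro{m}{u'}>-1$ for every other vertex $u'$ of $S^*$; because all these pairings are integers, this is the same as $\pro{m}{u}=-1$ together with $\pro{m}{u'}\ge 0$ for all $u'\ne u$ — that is, $m$ is a root of $S$ in the sense of Demazure.

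Applying the same criterion to $-m$, the conclusion of the lemma becomes the assertion that $\pro{m}{\cdot}$ takes the value $-1$ at exactly one vertex of $S^*$, the value $1$ at exactly one vertex, and $0$ at every remaining vertex. Now $-m$ is not an interior lattice point of $S$ (it is nonzero and $S$ is reflexive), which forces $\pro{m}{u'}\ge 1$ for some vertex $u'$ of $S^*$; hence $\max_{S^*}\pro{m}{\cdot}\ge 1$ automatically, and the only possibilities left to exclude are: (a) $\pro{m}{\cdot}\ge 2$ at some vertex of $S^*$, equivalently $-m\notin S$; and (b) $\pro{m}{\cdot}=1$ at two or more vertices of $S^*$, equivalently $-m$ lies on a face of $S$ of codimension at least two. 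Both (a) and (b) genuinely can occur without a further hypothesis (for instance $S=\conv\{(1,0),(1,2),(-1,-1)\}$ is reflexive, $(1,1)$ is a root, but $(-1,-1)$ is a vertex), so the barycenter condition must be used at this point.

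The cleanest way to conclude is to pass to the Gorenstein Fano variety $X$ whose fan is the fan over the faces of $S^*$: by the theorem of Wang--Zhu~\cite{WZ04} in the smooth case and of Berman--Berndtsson (see Theorem~\ref{thm-bb}) in general, the condition $\mathrm{bary}(S)=\0$ means that $X$ admits a (possibly singular) K\"ahler--Einstein metric, and therefore $\mathrm{Aut}(X)$ is reductive. By Demazure's description of the automorphism group of a complete toric variety, reductivity is equivalent to the set of roots of $S$ being invariant under $m\mapsto -m$; by the reformulation above this is exactly the statement that $-m$ again lies in the relative interior of a facet, ruling out (a) and (b).

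I expect the real obstacle to be this last step if one wants the self-contained combinatorial argument of \cite{Nil06} instead of the K\"ahler--Einstein input, i.e. proving directly that an ``unbalanced'' root $m$ (with vertex $u$ and $-m$ not a root) forces $\mathrm{bary}(S)\ne\0$. The natural attempt is to apply Gr\"unbaum's inequality~(\ref{eq:1}) to the hyperplane $\{\pro{u}{x}=0\}$ through the barycenter and to compare the $(n-1)$-dimensional slices of $S$ on its two sides: when no balancing root exists, $S$ is pushed away from the face where $\pro{u}{\cdot}$ is maximal and is concentrated toward the facet $F_u$, which still contains the lattice point $m$ in its relative interior and so cannot be too small. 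Turning this picture into a genuine contradiction is the delicate point, and it is where the integrality of all the pairings involved, together with the presence of an interior lattice point on $F_u$, has to enter.
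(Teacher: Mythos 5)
Your translation of the statement into central symmetry of the Demazure roots of $S$ (a lattice point lies in the relative interior of a facet iff it is a root, by integrality of the pairings with the vertices of $S^*$) is exactly the dictionary the paper itself uses, and your example $\conv\{(1,0),(1,2),(-1,-1)\}$ correctly shows the barycenter hypothesis cannot be dropped. However, the paper proves the lemma simply by quoting the purely convex-geometric implication i.$\Rightarrow$(a) of Theorem 5.2(2) in \cite{Nil06} (barycenter $=\0$ implies the root set is centrally symmetric), whereas your argument routes through K\"ahler--Einstein metrics and automorphism groups, and as written that route has two genuine gaps, both hidden in the sentence ``$X$ admits a (possibly singular) K\"ahler--Einstein metric, and therefore $\mathrm{Aut}(X)$ is reductive; by Demazure's description \dots''.

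First, the lemma must be applied (and is applied in the paper, to $R-\1$) to an arbitrary reflexive polytope, whose toric variety is in general singular and not even simplicial. Matsushima's reductivity theorem is a statement about smooth K\"ahler--Einstein manifolds; the extension to singular K\"ahler--Einstein Fano varieties is a deep later theorem (a Matsushima-type result of Berman--Berndtsson obtained via convexity of the K-energy, or via reductivity for K-polystable Fanos) and is not contained in \cite{BB12}, so ``therefore'' is doing real work that needs an explicit, nontrivial citation. Second, Demazure's structure theorem for the automorphism group concerns smooth complete toric varieties; for general complete (in particular non-simplicial Gorenstein Fano) toric varieties, the equivalence ``$\mathrm{Aut}(X)$ reductive $\iff$ roots centrally symmetric'' is essentially the main theorem of the very paper \cite{Nil06} you are trying to bypass (or requires the Cox/B\"uhler description of the automorphism group). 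So in the singular case your chain either collapses back onto \cite{Nil06} plus a singular Matsushima theorem, or it is incomplete; only in the smooth case is it closed by classical references (\cite{WZ04}, Matsushima, Demazure), and that does not cover the application. Your closing sketch of a direct Gr\"unbaum-type argument is, as you acknowledge, not carried out, so it cannot substitute for the missing step; the elementary convex-geometric proof is precisely what the cited implication in \cite{Nil06} supplies.
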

\begin{proof}
 This is precisely the implication from ${\mathrm i.}$ to (a) in part (2) of Theorem 5.2 in \cite{Nil06}. 
In that paper $P$ being semisimple means that the set of roots is centrally-symmetric; and the set of roots 
is the precisely the set of lattice points in the relative interior of facets of $P$, see Definition 4.1 in \cite{Nil06} and the paragraph before.
\end{proof}

It remains to consider the following situation: We have an $n$-dimensional polytope $R$ (which is
the polytope $\phi(P)$ above) with $\vol(R)=\vol((n+1)\Delta_n)$ such that
\begin{enumerate}
\item the barycenter $b_R$ equals $\1$.
\item $n \Delta_n \subseteq R$ and they share the vertex cone at $\0$. Thus $e_1, \ldots, e_n$ are
  primitive inner facet normals.
\item $R-\1$ is a reflexive polytope.
\end{enumerate}

\begin{lemma} In this situation, we also have the following condition:
{\rm \begin{enumerate}\setcounter{enumi}{3}
  \item for any $i=1,\ldots,n$, the point $\1+e_i$ is contained in the relative
    interior of a facet of $R$.
\end{enumerate}}
\end{lemma}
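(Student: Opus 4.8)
The plan is to deduce condition (4) from Lemma~\ref{lem:nil06} applied to the reflexive polytope $R-\1$, which has barycenter $\0$ by (1) and (3). Since $-e_i$ is a lattice point, it will be enough to show that $-e_i$ lies in the relative interior of a facet of $R-\1$; the lemma then produces a facet of $R-\1$ having $e_i$ in its relative interior, and translating by $\1$ this is precisely the assertion that $\1+e_i$ is in the relative interior of a facet of $R$. So the whole problem reduces to locating the point $\1-e_i$ inside $R$.

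First I would pin down the facet of $R$ with inner normal $e_i$. Because $R$ and $n\Delta_n$ share the vertex cone at $\0$ — which is $\R^n_{\ge 0}$ — and any polytope is contained in its vertex cone at each of its vertices, we get $R\subseteq\R^n_{\ge0}$; hence the facets of $R$ through $\0$ are exactly $G_j:=R\cap\{x_j=0\}$ for $j=1,\dots,n$, with primitive inner normals $e_j$ (consistent with (2)), and $\min_{x\in R}x_i=0$ is attained precisely on $G_i$.

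Next I would show $\1-e_i\in\operatorname{relint}(G_i)$. The point $\1-e_i=\sum_{j\neq i}e_j$ lies on $\{x_i=0\}$, and the identity
\[
\1-e_i\;=\;\tfrac1n\,\0\;+\;\sum_{j\neq i}\tfrac1n\,(ne_j)
\]
writes it as a convex combination, with all coefficients $\tfrac1n>0$, of the vertices $\0,ne_1,\dots,\widehat{ne_i},\dots,ne_n$ of the $(n-1)$-simplex $n\Delta_n\cap\{x_i=0\}$; thus $\1-e_i$ lies in the relative interior of that simplex. Since $n\Delta_n\subseteq R$, this simplex is an $(n-1)$-dimensional subpolytope of $G_i$, so a relative interior point of it is again a relative interior point of $G_i$. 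Hence $\1-e_i\in\operatorname{relint}(G_i)$, equivalently $-e_i\in\operatorname{relint}(G_i-\1)$, and $G_i-\1$ is a facet of $R-\1$; Lemma~\ref{lem:nil06} then finishes the argument.

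The only step that needs a moment's care is this last containment: a priori a relative interior point of the small simplex $n\Delta_n\cap\{x_i=0\}$ could migrate to the boundary of the larger facet $G_i$. This cannot happen, simply because both sets are full-dimensional in the hyperplane $\{x_i=0\}$, so a point with a neighborhood (within $\{x_i=0\}$) contained in the simplex has one contained in $G_i$. Everything else is a direct check.
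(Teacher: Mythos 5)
Your proposal is correct and takes essentially the same route as the paper: show that $\1-e_i$ lies in the relative interior of the facet $R\cap\{x\,:\,x_i=0\}$ (using $n\Delta_n\subseteq R$ and the shared vertex cone at $\0$), then apply Lemma~\ref{lem:nil06} to the reflexive polytope $R-\1$ with $m=-e_i$ and translate back. The only difference is that you spell out the detail, left implicit in the paper, that relative interiority in the $(n-1)$-simplex $n\Delta_n\cap\{x_i=0\}$ transfers to relative interiority in the full facet.
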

\begin{proof} We may assume $i=1$. Since $n \Delta_n \subseteq R$, we see that $\1-e_1$ is in the
  relative interior of the facet $F := \{x \in R \;:\; \pro{e_1}{x}=0\}$. Hence, we can apply
  Lemma~\ref{lem:nil06} to $R-\1$ and $m=(\1-e_1)-\1 = -e_1$ to see that $e_1$ is in the relative
  interior of a facet of $R-\1$. This proves the claim.
\end{proof}
We will show that $R=(n+1)\Delta_n$.  For $i=1,\ldots,n$, let us denote by $G_i$ the unique facet of
$R$ containing $m_i := \1+e_i$. Furthermore, let $u_i$ be the primitive {\em outer} normal of
$G_i$. Translating a polytope does not change the normal fan, so, as $R-\1$ is reflexive, we obtain
\begin{align}
  \pro{u_i}{m_i-\1}\ & =\ \pro{u_i}{e_i} \ =\ 1 & \text{and}&&   \pro{u_i}{m_j-\1}\ & =\ \pro{u_i}{e_j} \ \le\ 1\quad \text{for $j\ne i$} \label{eq:eq2}
\end{align}
Let $u_1:=(c_1,\ldots,c_n)$.  Then (\ref{eq:eq2}) implies $c_1=1$ and $c_j\le 1$ for $j=2, \ldots,
n$. If $c_j=1$ for some $j\ge 2$, then $m_j \in G_1$, and thus $G_j=G_1$.

By assumption we have $n \Delta_n \subseteq R$, so $\pro{u_1}{n e_1 - \1} \le 1$. Thus, for any $j\ge 2$
\begin{align*}
  n\ &=\ \pro{u_1}{n\cdot e_1}\ \le\ \pro{u_1}{\1}+1\ =\ 2 + c_2 + c_3 + \cdots + c_n\\
  &\le\ 1+c_j+(n-1)\ =\ n+c_j\,.
\end{align*}
This implies that $c_j\ge 0$, so $c_j\in\{0,1\}$ for $j=2, \ldots, n$. Furthermore, if $c_j=0$ for
some $j$, then $c_k=1$ for all $k$ except $j$.

Now assume that indeed there is some $j$ with $c_j=0$, say $j=2$. Then $u_1=\1-e_2$. We have
observed above that $c_j=1$ implies $G_j=G_1$, hence $G_1=G_3=\cdots=G_n$. We consider $u_2:=(d_1,
\ldots, d_n)$. Similarly as for $u_1$ we obtain $d_2=1$ and $d_j\in\{0,1\}$ for $j\ne 2$.  Since
$m_2=\1+e_2 \not\in G_1$ (because of $\pro{u_1}{m_2-\1}=0<1$), we have $G_2 \not= G_j$ for $j \in
\{1,3,\ldots,n\}$. Hence, again by the above remark, we must have $d_k \not=1$ for any
$k=1,3,\ldots,n$.  Therefore, $u_2=e_2$, so $\pro{u_1}{m_1}=n$ and $\pro{u_2}{m_2}=2$ yields
\[R \subseteq \{x \in \R_{\ge 0}^n \;:\; \pro{\1-e_2}{x}\le n,\; \pro{e_2}{x}
\le 2\}.\] Here, the right polytope is unimodularly equivalent to $(n \Delta_{n-1}) \times [0,2]$.
Therefore,
\[\frac{(n+1)^n}{n!} = \vol(R) \le \frac{2 n^{n-1}}{(n-1)!},\]
which implies $(n+1)^n \le 2 n^n$, a contradiction for $n \ge 2$.\\

Therefore, $c_j=1$ for $j=2,\ldots, n$. Hence, $u_1=(1,\ldots,1)$. So, 
\[R \subseteq \{x \in \R_{\ge 0}^n \;:\; \pro{\1}{x}\le n+1\} = (n+1) \Delta_n.\] Since
both polytopes have the same volume, we get equality.  $\hfill\qed$

\begin{remark}
  As was pointed out to us by Szarek after this paper has appeared as a preprint, the proof can be
  substantially simplified by using a characterization of the equality case in Gr\"unbaum's
  inequality. In the notation of the proof: {\em inequality~\eqref{eq:1} holds with equality if and
    only if $K$ is a pyramid over a base spanning an affine hyperplane parallel to $h$, where $h$ is
    the affine hyperplane bounding the half-space $H$}.

  Here is how one applies this result in the equality case. Let $R=\phi(P)$ again with
  $\vol(R)=\vol((n+1)\Delta_n)$, barycenter $b_R=\1$, and $n \Delta_n \subseteq R$. From the
  previous result we deduce that $R$ is a pyramid with apex $\0$ and whose base facet has outer
  facet normal $\1$. Since the $n$ coordinate planes are supporting facet hyperplanes of $R$, this
  implies that $R$ must be a simplex of the form $\lambda \Delta_n$ for some $\lambda \ge n$.  From
  $\vol(R)=\vol((n+1)\Delta_n)$ we get $\lambda=n+1$, as desired.

  At the time this paper was published as a preprint, the proof of the equality characterization of
  Gr\"unbaum's inequality was not available in the literature. Morever, the main result in Gr\"unbaum's original paper \cite{Gru60} is stated slightly differently, and so a characterization of 
the equality case in that setting wouldn't be directly applicable to our situation. In 2003, Szarek found an alternative proof of Gr\"unbaum's inequality that allows to deal more directly with the case of equality. He made his proof now publicly available~\cite{Sza13}.\label{szarek}\end{remark}

\section{Generalized Ehrhart's conjecture and greatest lower bound on the Ricci curvature}

For a general Fano manifold $X$ one can define
\begin{align*}
  R(X)\ :=\ \sup(t\in[0,1]\ :\ \mathrm{Ric}\ \omega \ge t \omega \ \text{for all K\"ahler metrics }
  \omega\in c_1(X))\,,
\end{align*}
giving a ``greatest lower bound on the Ricci curvature''.  See \cite{Li11,Sze11} for background and
motivation. In Theorem~2.9 of \cite{BB12}, Berman and Berndtsson gave (in more generality) the
following generalization of Theorem~\ref{thm-bb} in the case that there is no K\"ahler-Einstein
metric:
\begin{theorem}[Berman, Berndtsson '12]
  Let $X$ be a toric Fano manifold. Then the first Chern class $c_{1}(X)$ satisfies the following
  upper bound
\[
c_{1}(X)^{n}\leq\left(\frac{n+1}{R(X)}\right)^{n}
\]
\label{thm-bb-ricci}
\end{theorem}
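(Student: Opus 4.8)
The plan is to pass to the convex-geometric side, quote the known description of $R(X)$ in the toric case, and then run the Gr\"unbaum argument from the proof of Theorem~\ref{main} with a supporting half-space through the (now off-center) barycenter of the moment polytope rather than through $\1$. Let $Q$ be the Fano polytope of $X$, put $P:=Q^{*}$, and let $b$ be the barycenter of $P$; recall that $\0$ lies in the interior of both $Q$ and $P$ and that $c_{1}(X)^{n}=n!\,\vol(P)$. The external input I would use is Li's formula for the greatest lower bound on the Ricci curvature of a toric Fano manifold~\cite{Li11} (also a special case of Theorem~2.9 in~\cite{BB12}): setting
\[m\ :=\ \max\{\pro{l}{b}\;:\;l\text{ a vertex of }Q\}\ \ge\ 0,\]
one has $R(X)^{-1}=1+m$. (Geometrically this says that for $b\ne\0$ one has $R(X)=|\0\,\bar q|/|b\,\bar q|$, where $\bar q\in\partial P$ is the point in which the ray from $b$ through $\0$ leaves $P$; the equivalence uses only that $\bar q=-\mu b$ for the largest $\mu$ with $-\mu b\in P=Q^{*}$, namely $\mu=1/m$, because $-\mu b\in Q^{*}$ means $\pro{l}{b}\le 1/\mu$ for every vertex $l$ of $Q$.) With this, Theorem~\ref{thm-bb-ricci} is reduced to the volume estimate $\vol(P)\le\frac{(n+1)^{n}}{n!}\,(1+m)^{n}$.

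To prove the latter I would imitate the proof of Theorem~\ref{main}. Since $\0$ is interior to $Q$, we may fix linearly independent vertices $l_{1},\dots,l_{n}$ of $Q$ (for instance among the vertices of a single facet of $Q$) and consider the invertible affine-linear map $\phi(x)=(\pro{l_{1}}{x}+1,\dots,\pro{l_{n}}{x}+1)$. As before $\phi(P)\subseteq\R^{n}_{\ge 0}$, and $|\det(\phi)|\ge 1$ because the $l_{i}$ are lattice points, so $\vol(P)\le\vol(\phi(P))$; moreover the barycenter of $\phi(P)$ is $\phi(b)$. Put $\sigma:=\pro{\1}{\phi(b)}$, which is $\ge 0$ since $\phi(P)\subseteq\R^{n}_{\ge 0}$, and apply Gr\"unbaum's inequality~\eqref{eq:1} to $\phi(P)$ and the half-space $H:=\{x\in\R^{n}\;:\;\pro{\1}{x}\le\sigma\}$, whose bounding hyperplane passes through $\phi(b)$. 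Using $\phi(P)\subseteq\R^{n}_{\ge 0}$ again, we obtain
\[\Bigl(\tfrac{n}{n+1}\Bigr)^{n}\vol(\phi(P))\ \le\ \vol(\phi(P)\cap H)\ \le\ \vol\bigl(\R^{n}_{\ge 0}\cap H\bigr)\ =\ \vol(\sigma\Delta_{n})\ =\ \frac{\sigma^{n}}{n!}\,,\]
and hence $\vol(P)\le\vol(\phi(P))\le\frac{(n+1)^{n}}{n!}\bigl(\sigma/n\bigr)^{n}$.

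It remains to bound $\sigma$, and this is the only place where the formula for $R(X)$ enters. By construction $\sigma=\sum_{i=1}^{n}(\pro{l_{i}}{b}+1)=n+\sum_{i=1}^{n}\pro{l_{i}}{b}$, and since $m=\max\{\pro{l}{b}:l\text{ a vertex of }Q\}$ every term satisfies $\pro{l_{i}}{b}\le m$, so $\sigma\le n(1+m)$. Plugging this into the estimate above gives $\vol(P)\le\frac{(n+1)^{n}}{n!}(1+m)^{n}=\frac{(n+1)^{n}}{n!\,R(X)^{n}}$, that is, $c_{1}(X)^{n}=n!\,\vol(P)\le\bigl(\frac{n+1}{R(X)}\bigr)^{n}$, as desired. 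When $b=\0$ we have $m=0$, and the argument is exactly that of Theorem~\ref{main} with $R(X)=1$.

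The only ingredient that is not already supplied by the proof of Theorem~\ref{main} — and hence the place where there is genuinely something to check — is the convex-geometric description of $R(X)$, which I would quote from~\cite{Li11,BB12}. Note that only the inequality $R(X)^{-1}\ge 1+m$ is actually needed, and one should double-check the sign conventions and the degenerate case $b=\0$ when invoking it. Apart from that, the proof is a one-line modification of the earlier argument: replace the half-space $\{\pro{\1}{x}\le n\}$ through $\phi(\0)=\1$ by the parallel one through the barycenter $\phi(b)$. The same computation in fact yields a generalized Ehrhart-type bound: every convex body $K\subseteq Q^{*}$ with barycenter $b_{K}$ satisfies $\vol(K)\le\frac{(n+1)^{n}}{n!}(1+m_{K})^{n}$ with $m_{K}:=\max\{\pro{l}{b_{K}}:l\text{ a vertex of }Q\}$, of which Theorem~\ref{main} is the case $m_{K}=0$.
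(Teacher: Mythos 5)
Your proof is correct, but it takes a genuinely different route from the paper. The paper obtains Theorem~\ref{thm-bb-ricci} as a consequence of Theorem~\ref{main} via the recentering trick in Proposition~\ref{equiv}: one sets $K' := R(K)\,(K-b_K)$, checks $K' \subseteq K$ using a supporting-hyperplane argument and the fact that $x_K=(r/(r-1))b_K\in K$, and then applies the already-proved centered bound to $K'$ (which still lies in $Q^*$ and has barycenter $\0$); combined with $c_1(X)^n=n!\,\vol(Q^*)$ and Li's identity $R(X)=R(Q^*)$ this yields the convex-geometric proof of the theorem, and as a bonus it proves the full equivalence of Conjecture~\ref{ehrhart-general} with Conjecture~\ref{ehrhart} for arbitrary convex bodies and transfers the equality case as well. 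You instead rerun the Gr\"unbaum argument of Section~2 directly on $P=Q^*$, cutting with the half-space $\{\pro{\1}{x}\le\sigma\}$ through the barycenter $\phi(b)$ rather than through $\1$, bounding $\sigma=n+\sum_i\pro{l_i}{b}\le n(1+m)$, and using $R(Q^*)^{-1}=1+m$ with $m=\max\{\pro{l}{b}: l \text{ a vertex of } Q\}$. That last identity is exactly the elementary duality computation you sketch (the exit point of the ray is $-b/m$, and $m>0$ whenever $b\ne\0$ because $\0$ is interior to $Q$, so the degenerate case is consistent); the only genuinely external input is $R(X)=R(Q^*)$ for smooth $X$, which the paper also imports from Li and which both arguments need. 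Your version is self-contained modulo Gr\"unbaum's inequality and yields the slightly more general statement $\vol(K)\le\frac{(n+1)^n}{n!}(1+m_K)^n$ for any convex body $K\subseteq Q^*$, whereas the paper's reduction is shorter given Theorem~\ref{main}, applies to arbitrary convex bodies rather than only those contained in a dual polytope, and also settles the equality case --- which Theorem~\ref{thm-bb-ricci} itself does not require, so this is not a gap in your argument.
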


As a convex-geometric analogue, Li~\cite{Li11} introduced the following invariant for a convex body $K$.
\begin{definition} Let $K \subseteq \R^n$ be an $n$-dimensional convex body containing $\0$ in its
  interior. Let $b_K$ be its barycenter.  If $b_K=\0$, then we define $R(K) := 1$. Otherwise, let
  $x_K$ be the intersection of $b_K - \R_{\ge0} b_K$ with the boundary of $K$. Then $R(K)$ is
  defined as the distance of $x_K$ from $0$ divided by the distance of $x_K$ from $b_K$. In
  particular, $R(K) < 1$ in this case.
\end{definition}
Let $Q$ be an $n$-dimensional Fano polytope with associated toric variety $X_Q$.  In this
case Li~\cite{Li11} showed that for smooth $X_Q$ the two definitions of $R$ coincide, i.e.\
$R(X)=R(Q^*)$.

Taking this invariant $R(K)$ into account, Xiaodong Wang suggested a natural generalization of Ehrhart's conjecture
for convex bodies with arbitrary barycenter.
\begin{conj} Let $K \subseteq \R^n$ be an $n$-dimensional convex body. If $K$ contains only the
  origin as an interior lattice point, then
\[\vol(K) \leq \frac{(n+1)^n}{n!\, R(K)^n},\]
where equality holds if and only if $K$ is unimodularly equivalent to $(n+1) \Delta_n$.
\label{ehrhart-general}
\end{conj}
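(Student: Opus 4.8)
The plan is to run the Berman--Berndtsson/Klartag mechanism behind Theorem~\ref{main}, but with the cutting half-space taken through the (now possibly nonzero) barycenter of $K$, so that the loss in the estimate is measured precisely by $R(K)$; this is the convex-geometric mirror of the way Theorem~\ref{thm-bb-ricci} refines the Berman--Berndtsson bound. As in Section~2, the normalization is only available once $K$ is contained in the dual polytope $P=Q^*$ of a lattice polytope $Q$ with $\0$ in its interior, and I return to the general case at the end.

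\emph{The inequality, assuming $K\subseteq P=Q^*$.} Fix a vertex $v$ of $P$, choose facets $F_1,\dots,F_n$ through $v$ whose inner normals $l_{F_1},\dots,l_{F_n}$ (the corresponding vertices of $Q$) span $\R^n$, and let $\phi(x)=(\pro{l_{F_1}}{x}+1,\dots,\pro{l_{F_n}}{x}+1)$ exactly as in the proof of Theorem~\ref{main}. Then $|\det(\phi)|\ge 1$, $\phi(P)\subseteq\R^n_{\ge0}$, $\phi(v)=\0$, $\phi(\0)=\1$, and $\phi(K)$ has barycenter $\beta:=\phi(b_K)$, a point of the open orthant. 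Set $c:=\pro{\1}{\beta}>0$ and $H:=\{x:\pro{\1}{x}\le c\}$, so $\beta$ lies on $\partial H$; then \eqref{eq:1} together with $\phi(K)\subseteq\phi(P)\subseteq\R^n_{\ge0}$ gives $\left(\tfrac{n}{n+1}\right)^{n}\vol(\phi(K))\le\vol(\phi(K)\cap H)\le\vol(\R^n_{\ge0}\cap H)=c^n/n!$. The genuinely new ingredient is the estimate $c\le n/R(K)$: writing $x_K=-s\,b_K\in\partial K$ (so $R(K)=s/(s+1)$), the inclusion $x_K\in K\subseteq P=Q^*$ tested against each $l_{F_i}$ gives $\pro{l_{F_i}}{b_K}\le 1/s$, and summing over $i$ yields $c=n+\pro{\sum_i l_{F_i}}{b_K}\le n+n/s=n/R(K)$. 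Combining with $\vol(K)\le|\det(\phi)|\vol(K)=\vol(\phi(K))$ produces $\vol(K)\le(n+1)^n/(n!\,R(K)^n)$.

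\emph{Equality.} If $\vol(K)=(n+1)^n/(n!\,R(K)^n)$, every inequality above is an equality. From $|\det(\phi)|=1$ one gets that $\phi$ is unimodular affine, hence $P$ is reflexive; the equality $\phi(K)\cap H=\R^n_{\ge0}\cap H=c\Delta_n$ together with $\phi(v)=\0\in c\Delta_n$ forces $K=P$; and Gr\"unbaum's inequality is tight, so by the equality characterization recorded in Remark~\ref{szarek}, $\phi(P)$ is a pyramid over a base lying in a hyperplane parallel to $\{\pro{\1}{x}=c\}$. Because the $n$ coordinate hyperplanes are facet hyperplanes of $\phi(P)$, because $c\Delta_n\subseteq\phi(P)$, and because $\0$ is a vertex, this pins down $\phi(P)=\ell\Delta_n$ for some $\ell\ge n$, with $\ell=(n+1)/R(K)$ by comparing volumes. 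Finally $\phi(P)-\1$ is reflexive (the linear part of $\phi$ is unimodular and $P$ is reflexive), and $\ell\Delta_n-\1$ is reflexive exactly when $\ell=n+1$; hence $R(K)=1$, $\phi(P)=(n+1)\Delta_n$, and $K$ is unimodularly equivalent to $(n+1)\Delta_n$.

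\emph{The main obstacle.} The whole scheme rests on the normalization $\phi$, which exists only when $K$ sits inside the dual of a lattice polytope; dropping this hypothesis readmits bodies such as $\conv(\pm(3/2,1/4),\pm(3/2,5/4))$, and already the subcase $b_K=\0$, where $R(K)=1$, is Ehrhart's conjecture (Conjecture~\ref{ehrhart}) itself, which is open beyond the reflexive-type setting. So Conjecture~\ref{ehrhart-general} in full generality is at least as hard as Ehrhart's conjecture, and this is where I expect the real difficulty to lie: closing the gap would require either pushing the toric/reflexive machinery to arbitrary convex bodies with a unique interior lattice point, or reaching the sharp constant by a route that avoids an integral affine normalization altogether. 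With the present tools I would only expect a weaker estimate — of Milman--Pajor type, divided by $R(K)^n$ — to be within reach for general $K$, so the plan above should be read as a proof for the reflexive-type case together with an honest account of what stands between it and the stated conjecture.
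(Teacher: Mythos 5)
Your proposal is correct about the status of the statement: it is a conjecture, and neither you nor the paper proves it in full generality. What the paper actually does with this statement is Proposition~\ref{equiv}: it shows the generalized conjecture is \emph{equivalent} to Ehrhart's Conjecture~\ref{ehrhart} for arbitrary convex bodies, via the one-line dilation trick $K' := R(K)\,(K-b_K)$. One checks $K' \subseteq K$ (testing a supporting inequality $\pro{u}{x}\le c$ against $x_K=(r/(r-1))b_K \in K$), that $K'$ has barycenter $\0$ and no interior lattice points besides $\0$, and that $\vol(K')=R(K)^n\vol(K)$; the equality case follows because every facet of $(n+1)\Delta_n$ has a lattice point in its relative interior, forcing $K=K'$. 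The paper's Corollary (the dual-of-a-lattice-polytope case) then drops out by applying Theorem~\ref{main} to $K'\subseteq K\subseteq Q^*$. Your route to that same special case is different and valid: instead of reducing to the barycenter-zero case, you rerun the Klartag/Gr\"unbaum argument with the cut $\{\pro{\1}{x}\le c\}$ through $\beta=\phi(b_K)$ and prove $c\le n/R(K)$ by testing $x_K=-s\,b_K\in Q^*$ against the $l_{F_i}$; this computation is correct, and your equality analysis (Szarek's characterization as in Remark~\ref{szarek}, then reflexivity of $\phi(P)-\1$ forcing $\ell=n+1$, hence $R(K)=1$) also goes through, matching the paper's simplified treatment rather than its Section~2 combinatorial one. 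Two remarks: the step ``$0$ is a vertex, so $\phi(P)=\ell\Delta_n$'' deserves a sentence — one should say that the lateral facets of the pyramid all contain the apex, the $n$ coordinate hyperplanes support facets not parallel to the base, so the apex is their common point $\0$, and then $\phi(P)\cap H=c\Delta_n$ scaled from the apex gives $\phi(P)=\ell\Delta_n$ with no extra lateral facets. More substantively, you only record the trivial direction of the comparison with Ehrhart's conjecture (``at least as hard''); the paper's point is the nontrivial converse, that the $R(K)$-generalization adds no new difficulty at all, which your direct approach does not yield and which is the actual content attached to this statement.
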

We would like to remark that this apparent generalization is actually equivalent to Ehrhart's
conjecture.
\begin{prop}
  Conjecture~\ref{ehrhart-general} is equivalent to Conjecture~\ref{ehrhart}.
\label{equiv}
\end{prop}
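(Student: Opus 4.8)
The plan is to show that Conjecture~\ref{ehrhart-general} reduces to Conjecture~\ref{ehrhart} by a dilation trick that moves the barycenter to the origin at the cost of a controlled change in volume, while preserving the property of having the origin as the only interior lattice point. Since Conjecture~\ref{ehrhart} is the special case $R(K)=1$ of Conjecture~\ref{ehrhart-general}, only the forward implication (\ref{ehrhart} $\Rightarrow$ \ref{ehrhart-general}) needs an argument. So suppose Conjecture~\ref{ehrhart} holds and let $K$ be an $n$-dimensional convex body whose only interior lattice point is $\0$. If $b_K=\0$ we are already done, so assume $b_K\ne\0$ and write $R:=R(K)<1$.

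The key construction is to scale $K$ from the point $x_K$ (the boundary point of $K$ on the ray from $b_K$ away from $\0$, which by definition satisfies $\0 = x_K + \tfrac{1}{R}(b_K - x_K)$, equivalently $b_K$ divides the segment $[x_K,\0]$ in ratio $R:(1-R)$ measured appropriately). Concretely, I would consider the homothety $\psi$ centered at $x_K$ with ratio $R$, i.e. $\psi(y) = x_K + R\,(y - x_K)$, and set $K' := \psi(K)$. One checks directly from the definition of $x_K$ that $\psi(b_K)=\0$: indeed $x_K + R(b_K-x_K)$ is exactly the point at distance $R\cdot\mathrm{dist}(x_K,b_K) = \mathrm{dist}(x_K,\0)$ from $x_K$ toward $b_K$, which is $\0$. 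So $K'$ has barycenter $\0$ (homotheties commute with taking barycenters). Moreover $\vol(K') = R^n \vol(K)$.

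The point that needs checking — and the main obstacle — is that $K'$ still contains only $\0$ as an interior lattice point. This is where one must be careful, since shrinking toward $x_K$ could in principle be fine but the argument must actually use that $x_K \in K$. The clean way: $K' = \psi(K) \subseteq K$ because $\psi$ is a contraction toward the point $x_K \in K$ and $K$ is convex, so $\mathrm{int}(K') \subseteq \mathrm{int}(K)$, hence the only possible interior lattice point of $K'$ is $\0$, and $\0$ is interior to $K'$ since $\0 = \psi(b_K)$ and $b_K \in \mathrm{int}(K)$. Now applying Conjecture~\ref{ehrhart} to $K'$ gives $\vol(K') \le (n+1)^n/n!$, i.e. $R^n\vol(K) \le (n+1)^n/n!$, which is exactly the bound of Conjecture~\ref{ehrhart-general}.

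For the equality case: if $\vol(K) = (n+1)^n/(n!\,R^n)$ then $\vol(K') = (n+1)^n/n!$, so by the equality statement of Conjecture~\ref{ehrhart} the body $K'$ is unimodularly equivalent to $(n+1)\Delta_n$. In particular $K'$ is a lattice simplex, hence a lattice polytope; but $K' \subseteq K$ with $\vol(K') = R^n\vol(K) = \vol(K)$ forces $R=1$, contradicting $R<1$ unless $b_K=\0$ from the start. Thus in the equality case we necessarily have $b_K=\0$, $K=K'$, and $K$ is unimodularly equivalent to $(n+1)\Delta_n$. This establishes Conjecture~\ref{ehrhart-general}, and since the reverse implication is the trivial specialization $R(K)=1$, the two conjectures are equivalent. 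The one subtlety to double-check in writing this up is the degenerate possibility that the ray $b_K - \R_{\ge 0} b_K$ meets $\partial K$ in a way making $x_K$ ill-defined, but since $\0 \in \mathrm{int}(K)$ and $b_K \in \mathrm{int}(K)$, the ray from $b_K$ through $\0$ and onward exits $K$ at a well-defined boundary point, so $x_K$ and hence $\psi$ are unambiguous.
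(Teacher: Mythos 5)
Your reduction for the inequality part is correct and is in fact the same construction as the paper's: the homothety $\psi(y)=x_K+R\,(y-x_K)$ coincides with the map $y\mapsto R(y-b_K)$, because $x_K=\tfrac{R}{R-1}\,b_K$, so your $K'=\psi(K)$ is exactly the paper's $K':=R(K-b_K)$. Your containment $K'\subseteq K$ via convexity (each $\psi(y)=(1-R)x_K+Ry$ is a convex combination of two points of $K$) is a slightly slicker version of the paper's supporting-halfspace computation, which likewise boils down to $x_K\in K$; the rest of the inequality argument (barycenter $\0$, volume scaling, $\0$ the only interior lattice point of $K'$, apply Conjecture~\ref{ehrhart}) is fine, as is the trivial reverse implication.

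However, your treatment of the equality case has a genuine gap. After concluding that $K'$ is unimodularly equivalent to $(n+1)\Delta_n$, you assert that ``$K'\subseteq K$ with $\vol(K')=R^n\vol(K)=\vol(K)$ forces $R=1$.'' The identity $\vol(K')=\vol(K)$ is never established and does not follow from $K'\subseteq K$; since $\vol(K')=R^n\vol(K)$, claiming $\vol(K')=\vol(K)$ is precisely the statement $R=1$ that you are trying to prove, so the step is circular, and the observation that $K'$ is a lattice polytope is not used in any substantive way. What is needed is an argument that $K$ cannot strictly contain $K'$. The paper supplies it: every facet of $(n+1)\Delta_n$ contains a lattice point in its relative interior, so if $K\supsetneq K'$, then some point of $K$ lies strictly beyond a facet of $K'$ and the lattice point in the relative interior of that facet becomes an interior point of $K$; since $\0$ lies in the interior of $K'$, that lattice point is nonzero, contradicting the hypothesis that $\0$ is the only interior lattice point of $K$. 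Hence $K=K'$, which forces $b_K=\0$, $R(K)=1$, and $K$ unimodularly equivalent to $(n+1)\Delta_n$. With this replacement (or any other argument showing $K=K'$), your proof goes through.
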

\begin{proof}
  Let $K \subseteq \R^n$ be an $n$-dimensional convex body such that $K$ contains only the origin as
  an interior lattice point. Let $r := R(K) < 1$, and $b_K$ the barycenter of $K$.

  We define $K' := r(K-b_K)$. We claim $K' \subseteq K$. For this, let $u \in \R^n$, $c \in \R$ such
  that $\pro{u}{x} \le c$ for any $x \in K$. Then $\pro{u}{r(x-b_K)} \le r(c-\pro{u}{b_K})$. It
  suffices to show that $r(c-\pro{u}{b_K}) \le c$. However, this is equivalent to $\pro{u}{(r/(r-1))
    b_K} \le c$ which holds by our assumption: $(r/(r-1)) b_K = x_K \in K$.

  Therefore, $K'$ is an $n$-dimensional convex body with barycenter $\0$ which contains no other
  interior lattice points.  Since $\vol(K') = r^n \vol(K)$, we have $\vol(K') \le (n+1)^n/n!$ if and
  only if $\vol(K) \le (n+1)^n/(n! r^n)$. It remains to consider the equality case.  Assume
  $K'=(n+1) \Delta_n$. Since $K \supseteq K'$ and any facet of $K'$ contains a lattice point in its
  relative interior, we have $K=K'$.
\end{proof}
From the proof we get the following generalization of Theorem~\ref{main}.
\begin{corollary}
  The conclusion of Conjecture~\ref{ehrhart-general} holds for any $n$-dimensional convex body $K \subseteq \R^n$ that
  is contained in the dual polytope of a lattice polytope.
\end{corollary}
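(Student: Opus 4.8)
The plan is to combine the reduction already carried out in the proof of Proposition~\ref{equiv} with Theorem~\ref{main}. So let $K\subseteq\R^n$ be an $n$-dimensional convex body whose only interior lattice point is $\0$, and write $K\subseteq P:=Q^*$ for a lattice polytope $Q$ having $\0$ in its interior. If the barycenter $b_K$ of $K$ equals $\0$, then $R(K)=1$ and the claim is exactly Theorem~\ref{main}, so from now on I may assume $b_K\ne\0$ and set $r:=R(K)\in(0,1)$.

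Following the proof of Proposition~\ref{equiv}, I would introduce the auxiliary body $K':=r(K-b_K)$. The key observation is that $K'\subseteq K$; this is precisely the inclusion verified in that proof, using $x_K=\big(r/(r-1)\big)b_K\in K$. Consequently $K'$ is again contained in $P=Q^*$, i.e.\ in the dual polytope of a lattice polytope. Moreover $K'$ is an $n$-dimensional convex body with barycenter $\0$, and, since a ball lying inside $K'$ also lies inside $K$, the interior of $K'$ is contained in the interior of $K$; hence the only interior lattice point of $K'$ is again $\0$. Thus $K'$ satisfies the hypotheses of Theorem~\ref{main}.

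Applying Theorem~\ref{main} to $K'$ gives $\vol(K')\le(n+1)^n/n!$, and since $\vol(K')=r^n\vol(K)$ this is equivalent to $\vol(K)\le(n+1)^n/(n!\,R(K)^n)$, the asserted bound. For the equality statement, suppose $\vol(K)=(n+1)^n/(n!\,R(K)^n)$; then $\vol(K')=(n+1)^n/n!$, so by the equality part of Theorem~\ref{main} the body $K'$ is unimodularly equivalent to $(n+1)\Delta_n$. Arguing as at the end of the proof of Proposition~\ref{equiv}: every facet of $K'$ carries a lattice point in its relative interior, and any such point would be an interior lattice point of $K$ different from $\0$ unless $K=K'$; therefore $K=K'$ is unimodularly equivalent to $(n+1)\Delta_n$. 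Conversely, if $K$ is unimodularly equivalent to $(n+1)\Delta_n$, then under the equivalence $b_K$ is carried to the barycenter of $(n+1)\Delta_n$ and $\0$ to its unique interior lattice point; since these two points of $(n+1)\Delta_n$ coincide, $b_K=\0$, so $R(K)=1$ and $\vol(K)=(n+1)^n/n!$ realizes equality.

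I do not expect a genuine obstacle, since all the substance lies in Proposition~\ref{equiv} and Theorem~\ref{main}; the only new points are that $K'$ still sits inside $Q^*$ (immediate from $K'\subseteq K$) and that it retains the lattice-point condition (from the inclusion of interiors). If any step warrants care it is this last topological inclusion together with the facet argument in the equality case, but both are already implicit in the proof of Proposition~\ref{equiv}.
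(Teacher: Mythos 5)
Your proposal is correct and follows exactly the route the paper intends: the corollary is stated as a consequence of the proof of Proposition~\ref{equiv}, i.e.\ one passes to $K':=R(K)(K-b_K)\subseteq K\subseteq Q^*$, applies Theorem~\ref{main} to $K'$, and settles equality via the lattice points in the relative interiors of the facets of $(n+1)\Delta_n$, just as you do. The extra details you supply (that $K'$ inherits the containment in $Q^*$ and the interior-lattice-point condition) are exactly the points the paper leaves implicit, and they are argued correctly.
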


In particular, this implies a convex-geometric proof of Theorem~\ref{thm-bb-ricci}. The following
consequence seems to be new:

\begin{corollary}
  In the situation of Theorem~\ref{thm-bb-ricci}, $c_{1}(X)^{n}=\left(\frac{n+1}{R(X)}\right)^{n}$
  if and only if $X \cong \P^n$.
\end{corollary}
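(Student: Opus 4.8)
The plan is to deduce this corollary directly from the previous corollary together with the algebro-geometric dictionary already recalled in the introduction and in Section~3. Concretely, let $X$ be a toric Fano manifold with defining Fano polytope $Q$, and set $K := Q^*$. By construction $K$ is the dual polytope of the lattice polytope $Q$, so the previous corollary applies: we have $\vol(K) \le (n+1)^n/(n!\,R(K)^n)$, with equality if and only if $K$ is unimodularly equivalent to $(n+1)\Delta_n$. The only remaining task is to translate both sides of this statement into the stated geometric one.

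For the translation I would invoke three facts already in the paper: first, $c_1(X)^n = n!\,\vol(Q^*) = n!\,\vol(K)$; second, Li's result that $R(X) = R(Q^*) = R(K)$ for smooth $X_Q$, which is cited just before Conjecture~\ref{ehrhart-general}; and third, that $X \cong \P^n$ if and only if the dual of the associated polytope $Q$ is unimodularly equivalent to $(n+1)\Delta_n$, i.e.\ if and only if $K = Q^*$ is unimodularly equivalent to $(n+1)\Delta_n$, as recalled in the introduction. Substituting, the inequality $\vol(K) \le (n+1)^n/(n!\,R(K)^n)$ becomes exactly $c_1(X)^n \le ((n+1)/R(X))^n$ — which re-derives Theorem~\ref{thm-bb-ricci} — and the equality characterization becomes $c_1(X)^n = ((n+1)/R(X))^n$ if and only if $X \cong \P^n$.

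There is essentially no obstacle here; the work has all been done in the previous corollary and in Proposition~\ref{equiv}. The one point requiring a word of care is that $R$ is used in two senses — the differential-geometric $R(X)$ defined via Ricci lower bounds and the convex-geometric $R(K)$ — and the corollary only makes sense because Li proved these agree for smooth toric $X$; so I would make that appeal explicit. I would also note in passing that the hypothesis "toric Fano \emph{manifold}" (rather than the singular Fano varieties of Proposition~\ref{main-alg-geo}) is exactly what makes $Q$ a smooth Fano polytope and thus makes Li's identification $R(X)=R(Q^*)$ available.

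\begin{proof}
  Let $Q$ be the Fano polytope associated to the toric Fano manifold $X$, and put $K := Q^*$, the
  dual polytope of the lattice polytope $Q$. By the previous corollary, the conclusion of
  Conjecture~\ref{ehrhart-general} holds for $K$: we have $\vol(K) \le (n+1)^n/(n!\,R(K)^n)$, with
  equality if and only if $K$ is unimodularly equivalent to $(n+1)\Delta_n$. Now
  $c_1(X)^n = n!\,\vol(Q^*) = n!\,\vol(K)$, and since $X$ is smooth, Li's result gives
  $R(X) = R(Q^*) = R(K)$. Substituting both identities, the equality $c_1(X)^n =
  \left(\frac{n+1}{R(X)}\right)^n$ holds if and only if $K = Q^*$ is unimodularly equivalent to
  $(n+1)\Delta_n$. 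As recalled in the introduction, the latter is equivalent to $X \cong \P^n$.
\end{proof}
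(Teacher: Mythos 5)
Your proof is correct and follows essentially the same route as the paper: substitute $c_1(X)^n = n!\,\vol(Q^*)$ and Li's identification $R(X)=R(Q^*)$, then apply the equality characterization from the previous corollary. The only cosmetic difference is that the paper verifies the ``if $X\cong\P^n$'' direction by directly computing $c_1(\P^n)^n=(n+1)^n$ and $R(\P^n)=1$, whereas you obtain it from the biconditional in the corollary together with the correspondence between $\P^n$ and $(n+1)\Delta_n$ recalled in the introduction.
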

\begin{proof}
Recall that $c_1(X)^n = n!\, \vol(Q^*)$ and $R(X)=R(Q^*)$. Hence, if equality in Theorem~\ref{thm-bb-ricci} holds, then the previous corollary implies that $Q^*$ is unimodularly equivalent to $(n+1) \Delta_n$, thus $X \cong \P^n$. On the other hand, if $X \cong \P^n$, then $c_1(X)^n=(n+1)^n$ and $R(X)=1$.\end{proof}

Finally, let us remark that Debarre showed in \cite{Deb01} on p.139 that for a general toric Fano
$n$-fold $X$ there is no polynomial bound on $\sqrt[n]{c_{1}(X)^{n}}$. In particular, there are
examples of toric Fano $n$-folds $X_n$ such that $R(X_n) \to 0$ for $n \to \infty$.


\begin{thebibliography}{XX}       

\bibitem{Bat94} V.V.\,Batyrev, \emph{Dual polyhedra 
and mirror symmetry for Calabi-Yau hypersurfaces in toric varieties}, 
J. Algebr. Geom. \textbf{3} (1994), 493--535.


\bibitem{BB12} R.J.\, Berman, B.\, Berndtsson, 
\emph{The volume of K\"ahler-Einstein Fano varieties and convex bodies}, 
Preprint,  \href{http://arxiv.org/abs/1204.1308}{\nolinkurl{arXiv:1204.1308}}, 2012.

\bibitem{CFG91}
H.T.\, Croft, K.J.\, Falconer, R.K.\, Guy,
\emph{Unsolved problems in geometry}, 
Problem Books in Mathematics 2, New York, NY, Springer, 1991.

\bibitem{Deb01}
O.\,Debarre, \emph{Higher-dimensional algebraic geometry}, 
Universitext, New York, NY, Springer, 2001.

\bibitem{Deb03} O.\, Debarre, \emph{Fano varieties}, 
In: K.jun.~B\"or\"oczky (ed.) et al., Higher dimensional varieties and rational points,
Lectures of the summer school and conference (Budapest, Hungary, 2001), 
Springer-Verlag, Bolyai Society Mathematical Studies \textbf{12}, 93--132, 2003.

\bibitem{Ehr55a} E.\, Ehrhart, 
\emph{Une g\'en\'eralisation du th\'eor\`eme de Minkowski}, 
C. R. Acad. Sci. Paris \textbf{240} (1955), 483--485.

\bibitem{Ehr55b} E.\, Ehrhart, 
\emph{Propri\'et\'es arithmog\'eom\'etriques des polygones}, 
C. R. Acad. Sci. Paris \textbf{241} (1955), 686--689.

\bibitem{Ehr64} E.\, Ehrhart, 
\emph{Une g\'en\'eralisation probable du th\'eor\`eme fondamental de Minkowski}, 
C. R. Acad. Sci. Paris \textbf{258} (1964), 4885--4887.

\bibitem{Ehr78} E.\, Ehrhart, 
\emph{Volume r\'eticulaire critique d'un simplexe}, 
J. Reine Angew. Math. \textbf{305} (1978), 218--220.

\bibitem{Fulton} W.\,Fulton, \emph{Introduction to Toric Varieties},
  Princeton University Press, 1993

\bibitem{GW93} P.\,Gritzmann, J.M.\,Wills, \emph{Lattice points}, 
in: Handbook of convex geometry, 765--797, North-Holland, Amsterdam, 1993. 

\bibitem{Gru60} B.\, Gr\"unbaum, 
\emph{Partitions of mass-distributions and of convex bodies by hyperplanes},
Pacific J. Math. \textbf{10} (1960), 1257–1261.

\bibitem{KS98} M.\,Kreuzer, H.\,Skarke, 
\emph{Classification of reflexive polyhedra in three dimensions},
Adv. Theor. Math. Phys. \textbf{2} (1998), 853--871.

\bibitem{KS00} M.\,Kreuzer, H.\,Skarke,
\emph{Complete classification of reflexive polyhedra in four dimensions},
Adv. Theor. Math. Phys. \textbf{4} (2000), 1209--1230.

\bibitem{Li11} C.\, Li, 
\emph{Greatest lower bounds on Ricci curvature for toric Fano manifolds}, 
Adv. Math. \textbf{226} (2011), 4921--4932.

\bibitem{Mab87}
T.\,Mabuchi, \emph{Einstein--K\"ahler forms, Futaki invariants and convex geometry on toric Fano varieties}, 
Osaka Journal of Mathematics \textbf{24} (1987), 705--737.

\bibitem{MP00}
V.D.~Milman and A.~Pajor, \emph{Entropy and asymptotic geometry of non-symmetric convex bodies},
Adv. Math. \textbf{152}, no.~2 (2000), 314--335.

\bibitem{Nil05} B.\, Nill, 
\emph{Gorenstein toric Fano varieties},
Manuscr. Math. \textbf{116} (2005), 183--210.

\bibitem{Nil06}
B.\,Nill, \emph{Complete toric varieties with reductive automorphism group}, 
Mathematische Zeitschrift \textbf{252} (2006), 767--786.

\bibitem{NP11} B.\, Nill, A.\, Paffenholz, 
\emph{Examples of non-symmetric K{\"a}hler-Einstein toric Fano manifolds}, 
Beitr. Alg. Geom. \textbf{52} (2011), 279--304.

\bibitem{Sza13} S.\, Szarek, 
\emph{On measures of symmetry and floating bodies}, 
Eprint, \href{http://arxiv.org/abs/1302.2076}{\nolinkurl{arXiv:1302.2076}}, 2013.

\bibitem{Sze11} 
G.\, Székelyhidi, \emph{Greatest lower bounds on the Ricci curvature of Fano manifolds}, 
Compos. Math. \textbf{147} (2011), 319--331.

\bibitem{WZ04}
X.\,Wang, X.\,Zhu, \emph{K\"ahler--Ricci solitons on toric manifolds with positive first Chern class}, 
Advances in Mathematics \textbf{188} (2004), 87--103.

\end{thebibliography}
\end{document}